\documentclass[a4paper,11pt]{article}
\usepackage[margin=1in]{geometry}
\usepackage{amsmath,amssymb,amsfonts,stmaryrd}
\usepackage{comment}
\usepackage{commath}
\usepackage{soul}
\usepackage{array}
\usepackage{empheq}
\usepackage{xfrac}
\usepackage{minibox}
\usepackage{enumitem}
	\setlist{nosep} %
\usepackage{color}
\usepackage{tikz} 
\usepackage[lofdepth,lotdepth]{subfig}

\usepackage[nocompress]{cite}
\usepackage[hidelinks]{hyperref}
\definecolor{darkgreen}{rgb}{0.0, 0.5, 0}
\hypersetup{
 colorlinks   = true, %
 urlcolor     = blue, %
 linkcolor    = darkgreen, %
 citecolor   = darkgreen %
}
\usepackage[nameinlink]{cleveref}

\newcommand{\CK}{{\cal N}}

\newcommand{\CR}{{\cal R}}
\newcommand{\CS}{{\cal S}}

\newcommand{\CV}{{\cal V}}

\newcommand{\la}{\langle}
\newcommand{\ra}{\rangle}

\usepackage{amsthm}
\newtheorem{theorem}{Theorem}

\newtheorem{corollary}[theorem]{Corollary}
\newtheorem{lemma}[theorem]{Lemma}
\newtheorem{remark}[theorem]{Remark}

 \newcommand{\tcb}{\textcolor{blue}}
 \newcommand{\tcr}{\textcolor{red}}

\begin{document}

\title{On Compatible Transfer Operators in\\Nonsymmetric Algebraic Multigrid}
\author{Ben S. Southworth and Thomas A. Manteuffel}
\maketitle

\begin{abstract}
The standard goal for an effective algebraic multigrid (AMG) algorithm is to develop relaxation and
coarse-grid correction schemes that attenuate complementary error modes.
In the nonsymmetric setting, coarse-grid correction $\Pi$ will almost certainly be
nonorthogonal (and divergent) in any known standard product, meaning $\|\Pi\| > 1$.
This introduces a new consideration,
that one wants coarse-grid correction to be as close to orthogonal as
possible, in an appropriate norm. In addition, due to non-orthogonality,
$\Pi$ may actually amplify certain error modes that are in the range of interpolation.
Relaxation must then not only be complementary to interpolation, but also rapidly eliminate
any error amplified by the non-orthogonal correction, or the algorithm may diverge.
This paper develops analytic formulae on how to construct
``compatible'' transfer operators in nonsymmetric AMG such that $\|\Pi\| = 1$
in some standard matrix-induced norm. Discussion is provided on different
options for the norm in the nonsymmetric setting, the relation between ``ideal''
transfer operators in different norms, and insight into the convergence
of nonsymmetric reduction-based AMG.
\end{abstract}
\section{Background}\label{sec:intro}

\subsection{Algebraic multigrid}

Algebraic multigrid (AMG) is a fixed-point iterative method to solve large
sparse linear systems $A\mathbf{x}=\mathbf{b}$, based on two-parts, relaxation
and coarse-grid correction. These two parts are designed to attenuate
complementary error modes, together resulting in rapid convergence. Relaxation
takes the form of a standard fixed-point iteration,
\begin{align*}
\mathbf{x}_{k+1} = \mathbf{x}_k + Q^{-1}(\mathbf{b} - A\mathbf{x}_k),
\end{align*}
where $Q^{-1}$ is some relatively easy to compute approximation to $A^{-1}$.
Coarse-grid correction is a  subspace correction defined by interpolation and
restriction operators, $P$ and $R$, respectively,
\begin{align*}
\mathbf{x}_{k+1} = \mathbf{x}_k + P(R^*AP)^{-1}R^*(\mathbf{b} - A\mathbf{x}_k).
\end{align*}
Here, $R^*$ restricts the residual to a subspace, a surrogate coarse-grid
operator, $R^*AP$, is inverted in the subspace, and $P$ interpolates the
coarse-grid solution as a correction on the fine grid. If the
coarse-grid operator, $\mathcal{K} := R^*AP$, is too big to invert directly, the
algorithm is called recursively and AMG is applied to (approximately) solve
$(R^*AP)\mathbf{x}_c = R^*\mathbf{r}$.

For presentation purposes, we assume that degrees of freedom (DOFs) can be
partitioned into C-points and F-points and we order the F-points first, followed by 
the C-points. We assume that $R$ and $P$ have an identity over the C-point block, 
that is,
\begin{align} 
A = \begin{bmatrix} A_{ff} & A_{fc} \\ A_{cf} & A_{cc} \end{bmatrix},\hspace{3ex}
	R = \begin{bmatrix} Z \\ I \end{bmatrix}, \hspace{3ex}
	P = \begin{bmatrix} W \\ I \end{bmatrix}.\label{eq:blocks}
\end{align}
As discussed later, this is not technically necessary, but makes the analysis
more tractable and practical. In analyzing AMG methods, one typically
considers bounding error propagation in some norm. Error propagation of
relaxation and coarse-grid correction, respectively, take the forms
\begin{align*}
\mathbf{e} \mapsfrom \hspace{1ex}& (I - Q^{-1}A)\mathbf{e}, \\
\mathbf{e} \mapsfrom \hspace{1ex}& (I - P(R^*AP)^{-1}R^*A)\mathbf{e} \\
	\coloneqq & (I - \Pi)\mathbf{e},
\end{align*}
where $Q\approx A$ is easy to invert, and $\Pi$ denotes the projection
corresponding to coarse-grid correction.

\subsection{Nonsymmetric algebraic multigrid}

For symmetric positive definite (SPD) matrices, letting $R\coloneqq P$ implies $\|\Pi\|_A
= 1$. In the nonsymmetric setting, the $A$-norm {may not be} well-defined. 
Let $A_{sym}= (A+A^*)/2$. Then
$$
\| x \|_A^2 := \la A x , x\ra = \la A_{sym} x, x \ra = \| x \|_{A_{sym}}^2
$$
is a norm over $\Re^n$ only if $A_{sym}$ is SPD. Moreover,  even if $A_{sym}$ is SPD, 
$\Pi$ is not orthogonal in the associated inner product $\la A_{sym} \cdot,\cdot \ra$, 
which implies $\|\Pi\|_{A_{sym}} > 1$.
Let $M$ be an SPD matrix and define $\| x\|_M^2 = \la M x,x\ra$ to be the $M$-norm and 
$\la M x,y\ra$ the associated inner product. We refer to the property
 $\|\Pi\|_M \sim \mathcal{O}(1)$, independent of problem size or
mesh spacing, as being an ``$M$-stable'' coarse-grid correction
\cite{Brezina:2010dm,nonsymm}.

The standard goal for an effective AMG algorithm is to develop relaxation and
coarse-grid correction schemes that attenuate complementary error modes. In the SPD setting, theory defining and guaranteeing convergence (e.g. see \cite{vassilevski2008multilevel,brannick2018optimal}) provide natural measures to target in designing practical AMG methods. In particular \emph{approximation properties} on the interpolation operator $P$ with respect to a chosen relaxation scheme can guarantee convergence, and even provide tight convergence bounds \cite{Falgout:2005hm,brannick2018optimal,vassilevski2008multilevel}. For a thorough review of two-grid algebraic convergence theory for SPD matrices, we refer the reader to \cite[Ch. 3]{vassilevski2008multilevel}.

Approximation properties can also be extended to the nonsymmetric setting, and have been generalized to a so-called ``fractional approximation property'' in \cite{nonsymm}. A number of works have considered nonsymmetric AMG algorithms and Petrov-Galerkin coarse-grid operators, where $R\neq P$. Much of this work has been aggregation-based \cite{Brezina:2010dm,Manteuffel:2017,Notay:2000vy,Sala:2008cv,Wiesner:2014cy,Notay:2010em, notay2018}, but recently several classical CF-AMG variants have been proposed for nonsymmetric problems as well \cite{air1,air2,Manteuffel:2017,Lottes:2017jz}. Theory or heuristics are often used to motivate what properties a ``good'' restriction and interpolation operator should have, typically leaning on the SPD-AMG target of satisfying approximation properties, and then the two transfer operators are constructed largely independently. Unfortunately, approximation properties alone are \emph{not} sufficient to guarantee even two-grid convergence for nonsymmetric AMG \cite{nonsymm,air2}.

In the nonsymmetric setting there is a more fundamental question to AMG method design of whether relaxation and coarse-grid correction are even convergent in some reasonable norm. This introduces a new consideration, that one wants coarse-grid correction to be as close to orthogonal as possible, in an appropriate norm, and, moreover, emphasizes that relaxation and coarse-grid correction {must} be complementary in a more nuanced and challenging sense than in the SPD setting. Traditionally, relaxation is effective at attenuating error \textit{not} in the range of interpolation. For nonsymmetric problems, relaxation must also quickly attenuate error that is amplified by the non-orthogonal coarse-grid correction. In particular, $\Pi$ may amplify certain error modes that are \textit{in the range of interpolation}, and relaxation must eliminate this error rapidly, or the algorithm may diverge. In our opinion, constructing $R$ and $P$ so that $\Pi$ is almost orthogonal is the most viable approach to addressing this problem, in part because AMG almost always relies on fairly simple relaxation operators (in contrast to, e.g., geometric MG). 

To our knowledge, this necessary relationship between relaxation and coarse-grid correction for nonsymmetric AMG has not been discussed in the literature, and there is minimal theory on how to go about constructing $R$ and $P$ on a practical level to ensure almost orthogonal ``stable'' projections. Nonsymmetric AMG based on approximate ideal restriction (AIR) \cite{air1,air2} is probably the closest method to considering compatible $R$ and $P$. There, some compatibility of $R$ and $P$ is taken into consideration in the framework and algorithm design, but not directly during the construction of operators. It was shown in \cite{nonsymm} that approximation properties on $R$ and $P$, a stable coarse-grid correction, and sufficient relaxation to damp divergent coarse-grid correction modes, are sufficient conditions for two-grid and multigrid W-cycle convergence. In addition, stability of coarse-grid correction is a necessary condition for two-grid convergence \cite{nonsymm}. Algorithms targeting approximation properties for $R$ and $P$ can often be extended from the SPD setting, e.g., \cite{Manteuffel:2017,Sala:2008cv,Brezina:2010dm}, and AMG in general relies on standard algebraic relaxation schemes (of which the stronger and more expensive variants may be needed in nonsymmetric AMG). However, to the best of our knowledge, at no point has theory been developed that provides a framework and target for constructing $R$ and $P$ \textit{together} such that they are, in some sense, compatible, and yield an $M$-stable coarse-grid correction. 

\subsection{Contributions}

In this paper we focus on how to construct $R$ and $P$ of the form \eqref{eq:blocks} such that the resulting $\Pi$ is $M$-stable in some meaningful or appropriate $M$-norm. We use the terms meaningful and appropriate to imply the existence of a
relaxation that is effective on the $M$-orthogonal complement of the range of $\Pi$
(range of $P$) and also attenuates any errors in the range of $P$ that are amplified
by $\Pi$.

It has been recognized that for difficult SPD problems, the choice and
construction of coarse grids, relaxation, and interpolation must be done in
conjunction, leading to the development of compatible relaxation techniques
\cite{Brannick.Falgout.2010,Livne.Livne.2004}.
Here, we develop a similar framework for compatible transfer
operators in nonsymmetric AMG. We begin by developing background
theory on projections in \Cref{sec:proj}. Given an $M$-induced inner product
for SPD matrix $M$, and restriction or
interpolation operator, we refer to a compatible restriction/interpolation
operator with respect to $M$ as one such that 
$\|\Pi\|_M = \|P(R^*AP)^{-1}R^*A\|_M = 1$ in the induced norm.
\Cref{sec:orth} develops necessary and sufficient conditions on $R$ and $P$
such that $\|\Pi\|_M = 1$ in any $M$-induced matrix norm, followed by closed forms for compatible interpolation
and restriction operators for the common choices of $M$, including $M = I$, $M=\sqrt{A^*A}$,  and $M = A^*A$, and a relation between the ``ideal'' multigrid transfer operators in different norms. Further observations are provided in \Cref{sec:obs}, including observations on the compatibility of relaxation with non-orthogonal projections in AIR. We do not address approximation properties directly in this work, but believe the newly developed theory coupled with care given to approximation properties and potentially relaxation will facilitate algorithm design for faster and more robust nonsymmetric AMG \cite{nonsymm}.

\section{Linear algebra and projections}\label{sec:proj}

Given any finite dimensional linear operator, we have the following. Let $A$ be an $m\times n$ matrix and define the following notation
\begin{alignat*}{2}
\CR(A) &= \mbox{range of $A$}, \hspace{10ex}
	&A^* &= \mbox{adjoint in standard inner product}, \\
\CK(A) &= \mbox{null space of $A$},
	&\dim(\CR(A))&= \mbox{dimension of $\CR(A)$}.
\end{alignat*}
The fundamental theorem of linear algebra yields:
\begin{alignat*}{2}
\dim(\CR(A)) &= \dim(\CR(A^*)), \hspace{5ex}
	&\CR(A) &\perp \CK(A^*),\\
\dim(\CK(A)) &= \dim(\CK(A^*)), 
	&\CR(A^*) &\perp \CK(A).
\end{alignat*}
Let $M$ be SPD and define the $M$-inner product and associated notation
\begin{alignat*}{3}
\la x, y \ra_M &= \la M x, y  \ra,\hspace{3ex}
	&\| x \|_M^2 &= \la M x, x \ra, \hspace{3ex}
 	&x \perp_M y &\Leftrightarrow \la x, y \ra_M =0, \\
\la A x, y \ra_M &= \la x, A^\dagger y  \ra_M,
&A^\dagger &= M^{-1} A^* M.
\end{alignat*}
For any subspace $\CV$, let
\begin{displaymath}
\CV^{\perp_M} = \{ x ~:~ \la x, y \ra_M =0  ~~\forall y \in \CV\}.
\end{displaymath}
It is clear that $\CV^{\perp_M} = M^{-1} \CV^\perp$, and we have the following generalization:
\begin{equation*}
\CR(A) \perp_M \CK(A^\dagger),\hspace{5ex}
\CR(A^\dagger) \perp_M \CK(A).
\end{equation*}

\subsection{Projections}

The operator $\Pi$ is a projection if $\Pi^2 = \Pi$. Then, $(I-\Pi)$ is also a projection and
\begin{equation*}
\CR(I-\Pi) = \CK(\Pi),\hspace{5ex}
\CK(I-\Pi) = \CR(\Pi).
\end{equation*} 
For any SPD $M$ we have
\begin{equation*}
\| \Pi \|_M = \sup_{x\neq 0} \frac{ \| \Pi x\|_M}{\| x \|_M},\hspace{5ex}
\|\Pi\|_M = \| I-\Pi \|_M.
\end{equation*}
Consider the following representation of $\Pi$. Let $n_c = $ dim$(\CR(\Pi))$. Construct $n\times n_c$ matrices $V$ and $U$ such that
\begin{align*}
 \CR(\Pi)  &= \CR(V), \hspace{3ex}
 	\CK(\Pi)^{\perp_M} = \CR(U) = \CR(\Pi^\dagger), \hspace{3ex}
	V^*M V = U^*M U = I.
\end{align*}
Then,
\begin{align*}
\Pi &= V(U^\dagger V)^{-1}U^\dagger
	= V(U^*MV)^{-1} U^*M, \hspace{5ex}
\|\Pi \|_M= \|(U^*MV)^{-1} \|.
\end{align*}
This last line takes a bit of work, but it is straightforward.

Next, let $B_R$, $B_P$ be any $n_c\times n_c$ nonsingular matrices. Define
\begin{equation*}
R = UB_R ,\hspace{5ex}
P = VB_P.
\end{equation*}
Then, we can also write
\begin{align*}
	\Pi = P(R^*MP)^{-1} R^*M, \hspace{5ex}
	\| \Pi \|_M &= \| B_P (R^*MP)^{-1}B_R^*\|.
\end{align*}

\subsection{Orthogonal projections}
We first show that a projection is orthogonal in the $M$-inner product if $\Pi = \Pi^\dagger$. In this case,
\begin{displaymath}
\Pi x \perp_M (I-\Pi)x,
\end{displaymath}
and 
\begin{displaymath}
\| \Pi \|_M = 1.0.
\end{displaymath}
To see this, write
\begin{equation*}
\la \Pi x, (I-\Pi) x\ra_M = \la x, \Pi^\dagger (I-\Pi)x\ra_M = \la x, \Pi(I-\Pi)x \ra_M = 0,
\end{equation*}
and, for every $x$,
\begin{equation*}
\| x \|_M^2 = \| \Pi x + (I-\Pi)x \|_M^2 = \|\Pi x\|_M^2 + \|(I-\Pi)x\|_M^2 \geq \|\Pi x\|_M^2.
\end{equation*}
For $x\in\CR(\Pi)$, we have $\|x\|_M = \| \Pi x\|$, which yields the result. We are now in a position to prove two simple lemmas.

\begin{lemma}\label{lem:orthog}
A projection, $\Pi$, is orthogonal in the M-inner product if and only if 
\begin{equation}
\CR(\Pi) \perp_M \CK(\Pi)
\end{equation}
\end{lemma}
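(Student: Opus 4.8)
The plan is to work directly from the definition recorded just above, namely that $\Pi$ is orthogonal in the $M$-inner product exactly when $\Pi = \Pi^\dagger$, together with two elementary facts about a projection already established: that $\CR(I-\Pi)=\CK(\Pi)$, and that every $x$ admits the decomposition $x = \Pi x + (I-\Pi)x$ with $\Pi x\in\CR(\Pi)$ and $(I-\Pi)x\in\CK(\Pi)$. Both implications then reduce to short manipulations with the defining adjoint identity $\la \Pi x,y\ra_M = \la x,\Pi^\dagger y\ra_M$.

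For the forward direction I would assume $\Pi=\Pi^\dagger$ and take arbitrary $u\in\CR(\Pi)$ and $v\in\CK(\Pi)$. Since $u$ lies in the range of a projection it is a fixed point, $u=\Pi u$, while $\Pi v = 0$. I would then compute $\la u,v\ra_M = \la \Pi u, v\ra_M = \la u,\Pi^\dagger v\ra_M = \la u,\Pi v\ra_M = 0$, where the third equality invokes $\Pi^\dagger=\Pi$. As $u,v$ were arbitrary, this gives $\CR(\Pi)\perp_M\CK(\Pi)$.

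For the reverse direction I would assume $\CR(\Pi)\perp_M\CK(\Pi)$ and aim to show $\la \Pi x,y\ra_M = \la x,\Pi y\ra_M$ for all $x,y$, which by uniqueness of the adjoint yields $\Pi^\dagger=\Pi$. The idea is to split both arguments via the projection decomposition and discard the cross terms: expanding $\la \Pi x,y\ra_M$ using $y=\Pi y + (I-\Pi)y$ annihilates the term $\la \Pi x,(I-\Pi)y\ra_M$, because $\Pi x\in\CR(\Pi)$ and $(I-\Pi)y\in\CK(\Pi)$ are $M$-orthogonal, leaving $\la \Pi x,\Pi y\ra_M$; expanding $\la x,\Pi y\ra_M$ using $x=\Pi x+(I-\Pi)x$ likewise leaves $\la \Pi x,\Pi y\ra_M$. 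Equating the two reduced expressions completes the argument.

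I do not anticipate a serious obstacle here. The only points requiring care are keeping straight which factor lives in the range and which in the kernel when invoking $M$-orthogonality, and remembering that ``orthogonal'' is being taken to mean self-adjointness $\Pi=\Pi^\dagger$ rather than the norm identity $\|\Pi\|_M=1$, so the equivalence must be proved against the former definition rather than the norm characterization established immediately above.
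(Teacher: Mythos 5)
Your proof is correct. The forward direction is essentially the paper's argument (the paper writes $\la \Pi x,(I-\Pi)y\ra_M$ for arbitrary $x,y$ rather than picking $u=\Pi u$ and $v$ with $\Pi v=0$, but the computation is the same). The reverse direction, however, takes a genuinely different and more direct route. The paper manipulates $0=\la \Pi x,(I-\Pi)y\ra_M$ into the operator identities $\Pi^\dagger y=\Pi^\dagger\Pi y$ and $\Pi x=\Pi^\dagger\Pi x$, deduces $\CK(\Pi)\subset\CK(\Pi^\dagger)$ and $\CR(\Pi)\subset\CR(\Pi^\dagger)$, upgrades both inclusions to equalities by dimension counting, and then concludes $\Pi=\Pi^\dagger$ because a projection is determined by its range and kernel. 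You instead verify the adjoint identity $\la \Pi x,y\ra_M=\la x,\Pi y\ra_M$ directly by splitting each argument as $\Pi(\cdot)+(I-\Pi)(\cdot)$ and killing the cross terms with the assumed $M$-orthogonality, then invoke uniqueness of the adjoint. Your version is shorter, avoids the dimensionality argument entirely, and does not need the (implicit) fact that $\Pi^\dagger$ is itself a projection; what the paper's longer route buys is the explicit intermediate facts $\CK(\Pi)=\CK(\Pi^\dagger)$ and $\CR(\Pi)=\CR(\Pi^\dagger)$, which feed naturally into the list of equivalent characterizations in \Cref{cor:orthogonal} (e.g.\ $\CR(M\Pi)=\CR(\Pi^*)$) used later in the paper. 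Your closing caveats are well placed: orthogonality here means $\Pi=\Pi^\dagger$, and the symmetry of the $M$-inner product is what lets you discard the cross term in both orders.
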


\begin{proof}
Assume $\Pi = \Pi^\dagger$. Recall that $\CK(\Pi) = \CR(I-\Pi)$. If $\Pi = \Pi^\dagger$, then, for every $x,y$,
\begin{displaymath}
\la \Pi x, (I-\Pi)y \ra_M =  \la  x, \Pi^\dagger (I-\Pi)y \ra_M = \la  x, \Pi(I-\Pi)y \ra_M  = 0.
\end{displaymath}
Thus, $\CR(\Pi) \perp_M \CK(\Pi)$.

Now, assume $\CR(\Pi) \perp_M \CK(\Pi)$. For every $x,y$, 
\begin{displaymath}
0= \la \Pi x, (I-\Pi)y \ra_M =  \la  x, \Pi^\dagger (I-\Pi)y \ra_M  = \la  x, \Pi^\dagger-\Pi^\dagger\Pi)y \ra_M ,
\end{displaymath}
which implies, for every $y$,
\begin{displaymath}
\Pi^\dagger y = \Pi^\dagger \Pi y
\end{displaymath}
If $y \in \CK(\Pi)$, then $\Pi^\dagger y = 0$.  Thus, $\CK(\Pi) \subset\CK(\Pi^\dagger)$. By a dimensionality argument, 
$\CK(\Pi) =\CK(\Pi^\dagger)$. Similar to above, for every $x,y$,
\begin{displaymath}
0= \la \Pi x, (I-\Pi)y \ra_M = \la (I-\Pi^\dagger)\Pi x, y \ra_M = \la (\Pi -\Pi^\dagger\Pi) x, y \ra_M. 
\end{displaymath}
This implies, for every $x$,
\begin{displaymath}
\Pi x = \Pi^\dagger \Pi x.
\end{displaymath}
Let $x \in \CR(\Pi)$. Then, $x = \Pi^\dagger x$. Thus, $\CR(\Pi) \subset \CR(\Pi^\dagger)$. A dimensionality argument yields $\CR(\Pi)=\CR(\Pi^\dagger)$. Thus, $\Pi = \Pi^\dagger$, which completes the proof.
\end{proof}

\begin{corollary}\label{cor:orthogonal}
The following are equivalent, all corresponding to an $M$-orthogonal projection:
\begin{displaymath}
\begin{array}{ccc}
\begin{array}{rcl}
\Pi &=& \Pi^\dagger \\
\Pi &=& M^{-1} \Pi^* M \\
M\Pi &=& \Pi^* M \\
M\Pi &=& (M\Pi)^*\\~~
\end{array}
&\qquad&
\begin{array}{rcl}
\CR(\Pi) &\perp_M& \CK(\Pi) = \CR(\Pi^*)^\perp\\
\CR(M\Pi) &\perp & \CR(\Pi^*)^\perp \\
\CR(M\Pi) &=& \CR(\Pi^*) \\
\CR(\Pi) &\perp_M& \CR(I-\Pi) \\
\CR(M\Pi) &\perp& \CR(I-\Pi)
\end{array}
\end{array}
\end{displaymath}
\end{corollary}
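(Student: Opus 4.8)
The plan is to establish all the equivalences in two stages: first the purely algebraic identities in the left column, then the geometric orthogonality statements in the right column, using \Cref{lem:orthog} as the bridge between the two. For the left column, everything reduces to the definition $\Pi^\dagger = M^{-1}\Pi^* M$ together with the symmetry $M^* = M$. The identity $\Pi = \Pi^\dagger$ is by definition the same as $\Pi = M^{-1}\Pi^* M$; multiplying on the left by $M$ gives $M\Pi = \Pi^* M$; and since $(M\Pi)^* = \Pi^* M^* = \Pi^* M$, the last line $M\Pi = (M\Pi)^*$ is merely a restatement of $M\Pi = \Pi^* M$. Hence the four algebraic conditions are equivalent by inspection, and $\Pi = \Pi^\dagger$ is exactly the hypothesis of \Cref{lem:orthog}.

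\Cref{lem:orthog} then supplies the crucial link $\Pi = \Pi^\dagger \iff \CR(\Pi)\perp_M\CK(\Pi)$, which is the first line of the right column. I would note that the asserted identity $\CK(\Pi) = \CR(\Pi^*)^\perp$ holds by the fundamental theorem applied to $\Pi$ (the dimensions match and $\CR(\Pi^*)\perp\CK(\Pi)$), and that $\CK(\Pi) = \CR(I-\Pi)$ as recorded at the start of the Projections subsection. Substituting $\CR(I-\Pi)$ for $\CK(\Pi)$ identifies the fourth right-column line with the first, so those two are immediately equivalent.

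The remaining work is to pass between $M$-orthogonality of $\CR(\Pi)$ and ordinary orthogonality of $\CR(M\Pi)$. For any subspace $S$, the relation $\la x,y\ra_M = \la Mx,y\ra$ gives $\CR(\Pi)\perp_M S \iff \CR(M\Pi)\perp S$, since $M\,\CR(\Pi) = \CR(M\Pi)$. Taking $S = \CR(\Pi^*)^\perp$ and $S = \CR(I-\Pi)$ converts the first right-column line into the second and fifth, respectively. Finally, I would derive $\CR(M\Pi) = \CR(\Pi^*)$ from $\CR(M\Pi)\perp\CR(\Pi^*)^\perp$ by a dimension count: the orthogonality gives the inclusion $\CR(M\Pi)\subseteq\big(\CR(\Pi^*)^\perp\big)^\perp = \CR(\Pi^*)$, and because $M$ is nonsingular we have $\dim\CR(M\Pi) = \dim\CR(\Pi) = \dim\CR(\Pi^*)$, so the inclusion is in fact an equality.

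The main obstacle, and really the only non-mechanical point, is this last line $\CR(M\Pi) = \CR(\Pi^*)$: it asserts an equality of subspaces rather than an orthogonality relation, so the containment coming from the adjacent condition must be upgraded to equality using $\dim\CR(\Pi) = \dim\CR(\Pi^*)$ and the rank-preserving action of the invertible $M$. Everything else is bookkeeping with the identities $A^\dagger = M^{-1}A^*M$, $\CV^{\perp_M} = M^{-1}\CV^\perp$, and the fundamental-theorem relations already collected at the beginning of \Cref{sec:proj}.
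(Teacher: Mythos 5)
Your proof is correct and follows exactly the route the paper intends: the corollary is stated without explicit proof as a consequence of \Cref{lem:orthog}, and your argument supplies precisely that derivation --- unwinding the definition $\Pi^\dagger = M^{-1}\Pi^*M$ for the algebraic conditions, invoking the lemma for the bridge to $\CR(\Pi)\perp_M\CK(\Pi)$, and converting among the geometric statements via $\CV^{\perp_M}=M^{-1}\CV^\perp$, $\CK(\Pi)=\CR(I-\Pi)=\CR(\Pi^*)^\perp$, and a dimension count for $\CR(M\Pi)=\CR(\Pi^*)$. No gaps.
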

The conditions that will be of most use later in this paper are $\CR(\Pi) \perp_M \CK(\Pi)$ and  $\CR(M\Pi) = \CR(\Pi^*)$.

\subsection{Non-orthogonal projections}

Projections are geometric in nature. To that end, define the minimal canonical
angle, $\theta_{min}$, between subspaces $\mathcal{X},\mathcal{Y}$, as
\begin{align}
\cos\left(\theta_{min}^{[\mathcal{X},\mathcal{Y}]_M}\right) & = \sup_{\substack{\mathbf{x}\in\mathcal{X},\\ \mathbf{y}\in\mathcal{Y} } }
	\frac{|\langle \mathbf{x},\mathbf{y}\rangle_M|}{\|\mathbf{x}\|_M\|\mathbf{y}\|_M}, \label{eq:theta1}
\end{align}
for SPD $M$ and inner product, $\langle\cdot,\cdot\rangle_M$. Note that $\theta_{min}^{[\mathcal{X},\mathcal{Y}]_M}
= \theta_{min}^{[\mathcal{X}^{\perp_M},\mathcal{Y}^{\perp_M}]_M}$ \cite{Deutsch:1995tc,Szyld:2006bg} and
\begin{align}
\|\Pi^\dagger\|_M^2 = \|\Pi\|_M^2 = \frac{1}{\sin^2\left(\theta_{min}^{[\mathcal{R}(\Pi),\mathcal{K}(\Pi)]_M}\right)}
	= 1 + \cot^2\left(\theta_{min}^{[\mathcal{R}(\Pi),\mathcal{K}(\Pi)]_M}\right).\label{eq:cot}
\end{align}

In the case of an {$M$-}orthogonal projection, $\theta_{min}^{[\mathcal{R}(\Pi),\mathcal{K}(\Pi)]_M} = \pi/2$ and $\cot(\theta_{min}^{[\mathcal{R}(\Pi),\mathcal{K}(\Pi)]_M}) = 0$. For non-orthogonal projections, the sup over $\mathbf{x}\in\mathcal{R}(\Pi)^{\perp_M}$
can be seen as a measure of the non-orthogonality.

\begin{lemma}\label{th:M}
For SPD matrix $M$ and projection $\Pi$,
\begin{displaymath}
\| \Pi \|_M^2 = 1 + \sup_{x\in \CK(\Pi^\dagger)}\frac{\|\Pi x\|_M^2}{\| x\|_M^2} 
= 1 + \sup_{x\in \CR(\Pi)^{\perp_M}}\frac{\|\Pi x\|_M^2}{\| x\|_M^2}
\end{displaymath}
Furthermore, $\sup_{\mathbf{x}\in\mathcal{R}(\Pi)^{\perp_M}} \frac{\|\Pi\mathbf{x}\|_M}{\|\mathbf{x}\|_M} = \cot\left(\theta_{min}^{[\mathcal{R}(\Pi),\mathcal{K}(\Pi)]_M}\right)$.
\end{lemma}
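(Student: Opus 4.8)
The plan is to prove that the two displayed suprema coincide, establish the main norm identity directly, and then read off the \emph{Furthermore} claim from \eqref{eq:cot}. First I would record that $\CK(\Pi^\dagger) = \CR(\Pi)^{\perp_M}$: since $\Pi^\dagger = M^{-1}\Pi^* M$, we have $x\in\CK(\Pi^\dagger)$ iff $\Pi^* M x = 0$ iff $Mx\in\CK(\Pi^*) = \CR(\Pi)^\perp$ (fundamental theorem), i.e. $x\in M^{-1}\CR(\Pi)^\perp = \CR(\Pi)^{\perp_M}$, using the identity $\CV^{\perp_M} = M^{-1}\CV^\perp$ noted earlier. This identifies the two suprema, so it suffices to work over $\CR(\Pi)^{\perp_M}$.

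The heart of the argument is a pointwise Pythagorean identity. For $x\in\CR(\Pi)^{\perp_M}$ set $a := \Pi x\in\CR(\Pi)$ and $w := (I-\Pi)x\in\CK(\Pi)$, so $x = a + w$. Because $x\perp_M\CR(\Pi)$ and $a\in\CR(\Pi)$, testing $\la x, a\ra_M = 0$ gives $\la a, w\ra_M = -\|a\|_M^2$, whence $\|x\|_M^2 = \|a\|_M^2 + 2\la a,w\ra_M + \|w\|_M^2 = \|w\|_M^2 - \|a\|_M^2$. Rearranging yields the key identity
\[
\|x\|_M^2 + \|\Pi x\|_M^2 = \|(I-\Pi)x\|_M^2, \qquad x\in\CR(\Pi)^{\perp_M},
\]
equivalently $1 + \|\Pi x\|_M^2/\|x\|_M^2 = \|(I-\Pi)x\|_M^2/\|x\|_M^2$.

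Next I would reduce the operator norm to this subspace using $\|\Pi\|_M = \|I-\Pi\|_M$ (stated earlier). For arbitrary $y$, decompose $M$-orthogonally $y = y_R + y_\perp$ with $y_R\in\CR(\Pi)$ and $y_\perp\in\CR(\Pi)^{\perp_M}$. Since $\Pi y_R = y_R$, the crucial collapse is $(I-\Pi)y = (I-\Pi)y_\perp$, while $M$-orthogonality gives $\|y\|_M^2 = \|y_R\|_M^2 + \|y_\perp\|_M^2\geq\|y_\perp\|_M^2$. Hence $\|(I-\Pi)y\|_M^2/\|y\|_M^2\leq\|(I-\Pi)y_\perp\|_M^2/\|y_\perp\|_M^2$, and since the reverse inequality is automatic by restriction,
\[
\|\Pi\|_M^2 = \|I-\Pi\|_M^2 = \sup_{x\in\CR(\Pi)^{\perp_M}}\frac{\|(I-\Pi)x\|_M^2}{\|x\|_M^2}.
\]
Combining this with the pointwise identity gives $\|\Pi\|_M^2 = 1 + \sup_{x\in\CR(\Pi)^{\perp_M}}\|\Pi x\|_M^2/\|x\|_M^2$, the main claim.

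Finally, the \emph{Furthermore} follows by comparison with \eqref{eq:cot}: since $\|\Pi\|_M^2 = 1 + \cot^2(\theta_{min}^{[\CR(\Pi),\CK(\Pi)]_M})$, the main identity forces $\sup_{x\in\CR(\Pi)^{\perp_M}}\|\Pi x\|_M^2/\|x\|_M^2 = \cot^2(\theta_{min}^{[\CR(\Pi),\CK(\Pi)]_M})$, and taking square roots (both sides nonnegative) yields the stated formula. I expect the main obstacle to be the reverse inequality in the norm reduction: the clean collapse $(I-\Pi)y = (I-\Pi)y_\perp$ is what lets the supremum over all of $\Re^n$ reduce to $\CR(\Pi)^{\perp_M}$, and it is essential to work with $I-\Pi$ (not $\Pi$) and to invoke $\|\Pi\|_M = \|I-\Pi\|_M$; the same reduction applied directly to $\Pi$ does not telescope, since $\Pi y_R = y_R$ need not vanish.
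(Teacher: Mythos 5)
Your proof is correct and follows essentially the same route as the paper's: both use $\|\Pi\|_M = \|I-\Pi\|_M$, decompose $x$ into its $\CR(\Pi)$ and $\CR(\Pi)^{\perp_M}$ components, exploit the vanishing cross term $\la \Pi x_2, x_2\ra_M = 0$ to get the identity $\|(I-\Pi)x_2\|_M^2 = \|x_2\|_M^2 + \|\Pi x_2\|_M^2$, and read off the cotangent statement from \eqref{eq:cot}. Your explicit verification that $\CK(\Pi^\dagger) = \CR(\Pi)^{\perp_M}$ and the careful two-sided reduction of the supremum are slightly more detailed than the paper's presentation, but the argument is the same.
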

\begin{proof}
Recall, $\|\Pi\|_M = \| (I-\Pi)\|_M$. Given $x$, let $x = x_1 + x_2$ where $x_1 \in \CR(\Pi) = \CK(I-\Pi)$ and
$x_2 \in \CR(\Pi)^{\perp_M} = \CK(\Pi^\dagger)$. Note that $x_1\perp_M x_2$. Then,
\begin{eqnarray*}
\| I-\Pi \|_M^2 &=& \sup_{x} \frac{\la  (I-\Pi)x, (I-\Pi) x\ra_M}{\la x,x \ra_M} 
=  \sup_{x} \frac{\la  (I-\Pi)x_2, (I-\Pi) x_2\ra_M}{\la x_1,x_1 \ra_M + \la x_2,x_2 \ra_M} \\
&=& \sup_{x} \frac{\|x_2\|_M^2 - 2\la  \Pi x_2, x_2\ra_M+ \|\Pi x_2\|_M^2}{\| x_1\|_M^2 + \| x_2\|_M^2} \\
&=& \sup_{x} \frac{\|x_2\|_M^2 + \|\Pi x_2\|_M^2}{\| x_1\|_M^2 + \| x_2\|_M^2} 
= 1 + \sup_{x_2 \in \CR(\Pi)^{\perp_M}} \frac{\|\Pi x_2 \|_m^2}{\|x_2\|_M^2}.
\end{eqnarray*} 
The relation to the cotangent of the minimum angle between $\mathcal{R}(\Pi)$ and $\mathcal{K}(\Pi)$ follows from \eqref{eq:cot}. This completes the proof.
\end{proof}

Note that this result can be seen as an extension of \cref{lem:orthog}; if $\CR(\Pi) \perp_M \CK(\Pi)$, then obviously $\sup_{x\in \CR(\Pi)^{\perp_M}}\frac{\|\Pi x\|_M^2}{\| x\|_M^2} = 0$, and from \cref{th:M} $\|\Pi\|_M^2= 1$. \cref{th:M} also provides a potential optimization objective to form (almost) orthogonal transfer operators in practice. The following sections explore representations of $\mathcal{R}(\Pi)$ and its $M$-orthogonal complement to derive exactly orthogonal transfer operators. In the context of \cref{th:M}, these relations can also be used to measure and/or minimize the non-orthogonal component of $\Pi$.

\section{Compatible transfer operators in AMG}\label{sec:orth}

Let $A$ be an $n\times n$, nonsingular, possibly nonsymmetric, matrix.  
Let $R$ and $P$ be $n\times n_c$ interpolation and restriction matrices. Consider the
projection corresponding to coarse-grid correction,
\begin{equation}\label{eq:pi}
\Pi = P(R^*AP)^{-1} R^*A.
\end{equation}
If $A$ is SPD and $R = P$, then $\Pi$ is orthogonal in the A-inner product and, thus, $\|\Pi\|_A = 1.0$. 
If $A$ is nonsymmetric, then $A$ does not yield an inner product, and it is unclear which norm to measure this in. 
In this section, we examine how to construct ``compatible'' transfer operators $R$ and $P$ such that resulting $\Pi$ 
is $M$-orthogonal for  various $M$. 

To begin, we take the CF-splitting approach to AMG, where interpolation and restriction are defined as in \eqref{eq:blocks}. We want to define F-point 
blocks $Z$ and $W$ such that $\Pi$ is orthogonal in the $M$-inner product. Appealing to \Cref{lem:orthog} and \Cref{cor:orthogonal}, this is equivalent to satisfying $\CR(\Pi) \perp_M \CK(\Pi)$. With $\Pi$ defined as in \eqref{eq:pi}, $\CK(\Pi) = \CR(A^*R)^\perp$. We then have equivalent conditions
\begin{align}
	\CR(\Pi) \perp_M \CR(A^*R)^\perp \hspace{3ex}
		&\Longleftrightarrow\hspace{3ex} 
		M\CR(\Pi) \perp \CR(A^*R)^\perp \\
	&\Longleftrightarrow\hspace{3ex} M\CR(P)= \CR(A^*R).
\end{align}
We can formalize this in the following lemma.

\begin{lemma}\label{lem:ortho1}
Let $\Pi= P(R^*AP)^{-1} R^*A$. Then, $\Pi$ is $M$-orthogonal if we can find $n_c\times n_c$, nonsingular matrices, $B_P$ and $B_R$, such that
\begin{equation}\label{eq:ortho1}
MPB_P =   A^*RB_R.
\end{equation}
\end{lemma}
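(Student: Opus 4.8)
The plan is to prove the lemma directly by chaining the equivalences already established in the excerpt, so the argument amounts to verifying that condition \eqref{eq:ortho1} is exactly a matrix-level restatement of the subspace condition $M\CR(P) = \CR(A^*R)$, which by \Cref{lem:orthog} and \Cref{cor:orthogonal} characterizes $M$-orthogonality. First I would recall that $P$ and $R$ are $n\times n_c$ with full column rank (they have an identity block over the C-points by \eqref{eq:blocks}), so $\CR(P)$ and $\CR(A^*R)$ are both $n_c$-dimensional subspaces, and I would note that $A$ nonsingular guarantees $A^*R$ also has full column rank and that $\CK(\Pi) = \CR(A^*R)^\perp$.

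The core of the proof is to show $M\CR(P) = \CR(A^*R)$ if and only if there exist nonsingular $n_c\times n_c$ matrices $B_P,B_R$ with $MPB_P = A^*RB_R$. For the reverse direction, if such $B_P,B_R$ exist, then since $B_P$ is nonsingular $\CR(MPB_P) = \CR(MP) = M\CR(P)$, and since $B_R$ is nonsingular $\CR(A^*RB_R) = \CR(A^*R)$; the equation $MPB_P = A^*RB_R$ forces these two ranges to coincide. For the forward direction, if $M\CR(P) = \CR(A^*R)$, then each column of $MP$ lies in $\CR(A^*R)$, so there is a coefficient matrix $C$ with $MP = (A^*R)C$; because both $MP$ and $A^*R$ have rank $n_c$ and share the same $n_c$-dimensional range, $C$ must be nonsingular, and I can take $B_P = I$, $B_R = C$ to obtain \eqref{eq:ortho1}. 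Having established the subspace equivalence, I invoke the displayed chain preceding the lemma, namely $\CR(\Pi)\perp_M \CR(A^*R)^\perp \Leftrightarrow M\CR(\Pi)\perp \CR(A^*R)^\perp \Leftrightarrow M\CR(P) = \CR(A^*R)$, together with the fact that $\CR(\Pi) = \CR(P)$, to conclude that \eqref{eq:ortho1} implies $\CR(\Pi)\perp_M\CK(\Pi)$, which by \Cref{lem:orthog} is equivalent to $\Pi$ being $M$-orthogonal.

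I expect the main obstacle to be the careful handling of the rank and dimension bookkeeping in the forward direction, specifically arguing that the coefficient matrix $C$ relating $MP$ to $A^*R$ is invertible rather than merely some $n_c\times n_c$ matrix. This is where full column rank of $P$, $R$, and the nonsingularity of $M$ and $A$ are all essential, and it is the only step that is not a pure transcription of earlier results. A secondary subtlety is that the lemma is stated with the one-directional ``if'' (it asserts only sufficiency), so strictly I only need the reverse implication above; however, supplying both directions clarifies that \eqref{eq:ortho1} is in fact a complete characterization, and the forward direction is the one requiring the rank argument. Everything else reduces to the identity $\CR(\Pi) = \CR(P)$ and the equivalences already proved in \Cref{cor:orthogonal}, so no genuinely new computation is needed beyond this rank verification.
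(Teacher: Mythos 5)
Your proposal is correct and follows essentially the same route as the paper, which proves the lemma via the chain of equivalences $\CR(\Pi)\perp_M \CK(\Pi) \Leftrightarrow M\CR(P)=\CR(A^*R)$ derived just before the statement, combined with \Cref{lem:orthog} and \Cref{cor:orthogonal}. The only addition is your explicit rank argument showing that \eqref{eq:ortho1} with nonsingular $B_P,B_R$ is equivalent to the range condition; the paper leaves this bookkeeping implicit, and it is a correct (and welcome) elaboration rather than a different approach.
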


In the case of $R$ and $P$ in the CF-splitting form of AMG \eqref{eq:blocks}, \eqref{eq:ortho1} corresponds to: 
given $M$, find $\{Z,W\}$ and $\{B_R, B_P\}$ such that
\begin{displaymath}
M \left[ \begin{array}{c} W \\ I  \end{array}\right] B_P = A^* \left[ \begin{array}{c} Z \\ I  \end{array}\right] B_R.
\end{displaymath}
Note, in principle we only need $B_P$ \emph{or} $B_R$, but depending on $M$, working with one may be more convenient than the other. The following subsections we consider several natural examples of $M=I$, $M= A^*A$, $M= (A^*A)^{1/2}$, $M = A_{sym}$, and $M = A^*A_{sym}^{-1}A$.  

\subsection{$M=I$} 
Appealing to \Cref{lem:ortho1} we seek $B_P$ and $B_R$ such that $PB_P = A^*RB_R$. Expanding in block form yields
\begin{equation*}
 \left[ \begin{array}{c} W \\ I  \end{array}\right] B_P =  \left[ \begin{array}{c} A_{ff}^*Z+ A_{cf}^* \\ A_{fc}^*Z+ A_{cc}^*  \end{array}\right]B_R,
\end{equation*}
This can be accomplished by setting $B_P = (A_{fc}^*Z+ A_{cc}^*)$ and $B_R = I$ to get the condition
\begin{equation*}
WA_{fc}^*Z+ WA_{cc}^* - A_{ff}^*Z- A_{cf}^* = \mathbf{0}.
\end{equation*}
Taking the adjoint to work with non-transpose submatrices of $A$, and rearranging to solve for $W$ given $Z$ or $Z$ given $W$, yields the two (equivalent) conditions on $W$ and $Z$ such that $\Pi$ is $I$-orthogonal:
\begin{subequations}\label{eq:M=I}
\begin{align}
(Z^*A_{fc}+ A_{cc})W^* &= Z^*A_{ff}+ A_{cf}, \\
Z^*(A_{ff} - A_{fc}W^*) &= A_{cc}W^* -A_{cf}.
\end{align}
\end{subequations}
Note that the case of classical ideal restriction, $Z^*= -A_{cf}A_{ff}^{-1}$ \cite{air1,air2}, yields $W=\mathbf{0}$.

\subsection{$M=A^*A$}

Appealing to \Cref{lem:ortho1} we seek $B_R$ and $B_P$ such that $A^*APB_P = A^*RB_R\Leftrightarrow APB_P = RB_R$. Expanding in block form yields
\begin{equation*}
 \left[ \begin{array}{c} A_{ff}W+A_{fc} \\ A_{cf}W + A_{cc}  \end{array}\right] B_P = \left[ \begin{array}{c} Z \\ I  \end{array}\right]B_R,
\end{equation*}
In this case, choose $B_P = I$ and $B_R = (A_{cf}W + A_{cc})$ to arrive at the condition
\begin{equation*}
	ZA_{cf}W + ZA_{cc} - A_{ff}W - A_{fc} = \mathbf{0}.
\end{equation*}
Rearranging to solve for $W$ given $Z$ or $Z$ given $W$, yields the two (equivalent) conditions on $W$ and $Z$ such that $\Pi$ is $A^*A$-orthogonal,
\begin{subequations}\label{eq:M=A*A}
\begin{align}
 (A_{ff}- ZA_{cf} ) W &= ZA_{cc} -A_{fc}, \\
 Z(A_{cf}W + A_{cc}) & = A_{ff}W+A_{fc}.
\end{align}
\end{subequations}
These equations have a similar structure to the case of $M=I$ \eqref{eq:M=I}. Here, note that $Z=\mathbf{0}$ yields $W= -A_{ff}^{-1}A_{fc}$, which is ideal interpolation.

\subsection{$M=(A^*A)^{1/2}$}

For completeness we consider the norm of $M=(A^*A)^{1/2}$, tying into our theory paper on convergence of nonsymmetric AMG \cite{nonsymm}. Let $A$ have singular value decomposition $A= U\Sigma V^*$, where $U$ and $V$ are unitary and the diagonal matrix $\Sigma$ contains the singular values. Note that
\begin{displaymath}
 M = (A^*A)^{1/2} = V\Sigma V^*.
\end{displaymath}
Appealing to \Cref{lem:ortho1},  we seek $V\Sigma V^*PB_P = V\Sigma U^*RB_R \Leftrightarrow V^*PB_P= U^*RB_R$. In \cite{nonsymm}, we compared $V^* P$ and $U^*R$ and proved that if they have the same range, then $\Pi$ is $(A^*A)^{1/2}$-orthogonal, consistent with the framework developed herein.

\subsection{$M=A_{sym} = (A+A^*)/2$}  \label{sub:M=As}
If $A_{sym}$ is SPD, then this can be used to define a norm.
Appealing to (\ref{eq:ortho1}), $\Pi$ will be $M$-orthogonal if
there exist $P$ and $R$, and scaling matrices $B_P$ and $B_R$ such that
\begin{align}\label{eq:M=As}
A_{sym}PB_P = A^*RB_R.
\end{align}
Following the development above, given $Z$, set $B_P = I$ and one can solve for $W$ and $B_R$ that satisfies (\ref{eq:M=As}).
Likewise, given $W$, set $B_R = I$ and  one may solve for $Z$ and $B_P$. The resulting formula are complicated and we choose
to omit them. However, it will be shown below that a specific  choice of $P$ and $R$ satisfies (\ref{eq:M=As}).

\subsection{$M = A^* A_{sym}^{-1} A$}  Again, if $A_{sym}$ is SPD, then $M$ can be used to define a norm.  
It is an interesting norm because, in the case $A$  is a discrete form of second-order elliptic equation,  it  
has properties similar to $(A^*A)^{1/2}$. Appealing again to (\ref{eq:ortho1})
yields
\begin{align}
\nonumber
A^*A_{sym}^{-1}APB_P &= A^*RB_R ,\\ \label{eq:M=A*AsA}
APB_P &= A_{sym} R B_R.
\end{align}
Compare (\ref{eq:M=A*AsA}) with (\ref{eq:M=As}). As in section \ref{sub:M=As}, the formulas are 
complicated. However, it will be shown in the next section that a simple choice for $P$ and $R$ 
can be made to satisfy (\ref{eq:M=A*AsA}).

\subsection{Pairs of Ideal Transfer Operators}\label{sec:orth:comp}

For a given CF-spitting, the so-called ideal transfer operators can be defined on any matrix and there is a certain symmetry to the ideal $R$ and $P$. Consider a generic, nonsingular matrix, Q, in CF-block form,
$$
Q = \left[ \begin{array}{cc}
Q_{ff} & Q_{fc} \\ Q_{cf} & Q_{cc} 
\end{array} \right].
$$
If $Q_{ff}$ is nonsingular, we define ideal transfer operators of $Q$ to have the form
$$
P_{ideal}(Q) =  \left[\begin{array}{c} W_{ideal}(Q) \\ I \end{array}\right],
\quad R_{ideal}(Q) =  \left[\begin{array}{c} Z_{ideal}(Q) \\ I \end{array}\right] ,
$$
where $W_{ideal}(Q)$ and $Z_{ideal}(Q)$ are such that
\begin{align}
QP_{ideal}(Q) &= 
\left[ \begin{array}{cc}
Q_{ff} & Q_{fc} \\ Q_{fc} & Q_{cc} 
\end{array} \right] 
\left[\begin{array}{c} W_{ideal}(Q) \\ I \end{array}\right] 
=
\left[\begin{array}{c} 0 \\ \CS_Q \end{array}\right] , \\
R_{ideal}(Q)^*Q &= \left[\begin{array}{cc} Z_{ideal}(Q)^* &I \end{array}\right]
\left[ \begin{array}{cc}
Q_{ff} & Q_{fc} \\ Q_{cf} & Q_{cc} 
\end{array} \right]  
=
\left[\begin{array}{cc} 0 & \CS_Q \end{array}\right], 
\end{align}
and $\CS_Q = Q_{cc} - Q_{cf}Q_{ff}^{-1}Q_{fc}$ is the Schur complement of $Q$ \cite{air2}. Clearly, 
\begin{align*}
W_{ideal}(Q) &= - Q_{ff}^{-1}Q_{fc},\\
Z_{ideal}(Q)^* & =- Q_{cf} Q_{ff}^{-1}.
\end{align*}
{If we suppose that $Q = A$ and $P = P_{ideal}(Q)$, then for any choice of $R$ the associated coarse grid matrix is}
$$
A_c =  R^* A P = S_Q,
$$
as discussed in e.g., \cite{air1,air2}. However, the resulting $\Pi = P (R^*AP)^{-1} R^*A$ will be $M$-orthogonal if and only if (\ref{eq:ortho1}) is satisfied. 

More generally, given an SPD matrix $M$ and arbitrary nonsingular matrix $Q$, (\ref{eq:ortho1}) describes how to choose pairs $P$ and $R$ such that the resulting 
$\Pi$ is $M$-orthogonal. Rearranging (\ref{eq:ortho1}), setting $B_P = I$, and multiplying by $Q$ yields
\begin{align}
P &= M^{-1} A^* RB_R, \\
QP &=  QM^{-1} A^* R B_R.
\end{align}
If $P = P_{ideal}(Q)$, then
\begin{align}
QP = \left[ \begin{array}{c} 0\\ S_Q\end{array}\right] = QM^{-1} A^* R B_R,
\end{align}
which implies
\begin{align}\label{eq:RfromP}
R = R_{ideal}(AM^{-1}Q^*).
\end{align}
Similarly, rearranging (\ref{eq:ortho1}), setting $B_R = I$, and multiplying through by $Q^*$ yields
\begin{align}
A^{-*}M PB_P &= RB_R, \\
Q^*A^{-*}MPB_P &  = Q^*R.
\end{align}
If $R=R_{ideal} (Q)$, then 
\begin{align}\label{eq:PfromR}
Q^*A^{-*}MPB_P &= Q^*R = \left[\begin{array}{c} 0 \\S_Q^*\end{array}\right],
\end{align}
which implies $P = P_{ideal}(Q^*A^{-*}M)$. The above discussion leads to the following lemma.

\begin{lemma}\label{lem:ideal_pairs}  Let $M$ be an SPD matrix and define $\Pi = P(R^*AP)^{-1}R^*A$.  
If $P = P_{ideal}(Q)$, then $\Pi$ will be $M$-orthogonal if and only if $R=R_{ideal}(AM^{-1}Q^*)$. 
Likewise, if $R = R_{ideal}(Q)$, then $\Pi$ will be $M$-orthogonal if and only if $P=P_{ideal}(Q^* A^{-*}M)$.
\end{lemma}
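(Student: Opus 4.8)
The plan is to leverage Lemma \ref{lem:ortho1}, which states that $\Pi = P(R^*AP)^{-1}R^*A$ is $M$-orthogonal if and only if there exist nonsingular $B_P, B_R$ with $MPB_P = A^*RB_R$. This condition is the hinge of the entire argument, so the two halves of the lemma (the ``if and only if'' in each direction) should both be driven by manipulating \eqref{eq:ortho1}.

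For the first claim, I would start by assuming $P = P_{ideal}(Q)$ and rewriting the orthogonality condition \eqref{eq:ortho1} with $B_P = I$ in the form $P = M^{-1}A^*RB_R$. Left-multiplying by $Q$ gives $QP = QM^{-1}A^*RB_R$. The key observation is that $P = P_{ideal}(Q)$ forces $QP = \left[\begin{smallmatrix} 0 \\ S_Q \end{smallmatrix}\right]$ by the defining property of ideal interpolation. Hence \eqref{eq:ortho1} holds if and only if $QM^{-1}A^*RB_R = \left[\begin{smallmatrix} 0 \\ S_Q \end{smallmatrix}\right]$, which is exactly the statement that $R$ is the ideal restriction for the matrix $AM^{-1}Q^*$ (reading off the block structure: the F-block of $(AM^{-1}Q^*)^*R$ vanishes). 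This identifies $R = R_{ideal}(AM^{-1}Q^*)$, noting that $B_R$ absorbs the Schur-complement scaling and so any admissible scaling is captured. The second claim is the mirror image: assuming $R = R_{ideal}(Q)$, I would set $B_R = I$ in \eqref{eq:ortho1}, rearrange to $A^{-*}MPB_P = R$, and left-multiply by $Q^*$ to obtain $Q^*A^{-*}MPB_P = Q^*R = \left[\begin{smallmatrix} 0 \\ S_Q^* \end{smallmatrix}\right]$, which reads off as $P = P_{ideal}(Q^*A^{-*}M)$.

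The main obstacle I anticipate is verifying the ``only if'' directions cleanly, that is, showing the identification of $R$ (resp. $P$) is forced and not merely sufficient. The forward direction is a direct substitution, but the converse requires care that the ideal operator is the \emph{unique} one (up to the nonsingular scaling $B_R$ or $B_P$) whose relevant block annihilates under the given matrix. This uniqueness rests on $Q_{ff}$ — and correspondingly the $ff$-block of $AM^{-1}Q^*$ or $Q^*A^{-*}M$ — being nonsingular, so that the equation $W_{ideal} = -Q_{ff}^{-1}Q_{fc}$ has a unique solution; I would need to confirm these F-point blocks are invertible for the expressions $R_{ideal}(AM^{-1}Q^*)$ and $P_{ideal}(Q^*A^{-*}M)$ to be well-defined, and flag this as a standing assumption. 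Once invertibility is granted, the argument is essentially algebraic bookkeeping, reading the block structure on both sides of \eqref{eq:ortho1}.
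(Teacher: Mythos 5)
Your proposal is correct and follows essentially the same route as the paper: the paper's proof is precisely the manipulation of \eqref{eq:ortho1} with $B_P = I$ (resp.\ $B_R = I$), left-multiplication by $Q$ (resp.\ $Q^*$), and reading off $R = R_{ideal}(AM^{-1}Q^*)$ (resp.\ $P = P_{ideal}(Q^*A^{-*}M)$) from the block structure $QP = \bigl[\begin{smallmatrix} 0 \\ S_Q \end{smallmatrix}\bigr]$. Your added caveat about the nonsingularity of the relevant $ff$-blocks for the ideal operators to be well-defined is a reasonable standing assumption that the paper leaves implicit.
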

\begin{proof}
The proof follows from the discussion above.
\end{proof}

Next, various choices for $M$ and $Q$ will be examined. For the choices of $M$ described above,
and a set of choices for $Q$ such that $P_{ideal}(Q)$ is computable, (or at least can be approximated), the choice of $R$ that makes $\Pi$ $M$-orthogonal will be described, and similarly for $R_{ideal}(Q)$ and $P$ such that $\Pi$ is $M$-orthogonal. Results are shown in \Cref{tab:P,tab:R}.

\begin{table}[!htb]
\centering
$
\begin{array}{|c|ccc|cc|} \hline
M \text{ $\backslash$ } Q & I & A & A_{sym} & A^*A & AA^* \\ \hline
I & \tcr{A} & \tcb{AA^*} & \tcb{AA_{sym}} & AA^*A & A^2A^* \\
A & \tcr{I} & \tcr{A} & \tcr{A} & \tcb{A^2} & \tcb{A^2} \\
A_{sym}  & AA_{sym}^{-1} & A A_{sym}^{-1}A^* & \tcr{A} & A A_{sym}^{-1} A^*A & A A_{sym}^{-1} AA^* \\ \hline
A^*A & A^{-*} &\tcr{I} & A^{-*}A_{sym} & \tcr{A} & A^{-*}AA^* \\
A^*A_{sym}^{-1}A & A_{sym}A^{-*} &\tcr{A_{sym}} & A^{-*}A_{sym} & \tcb{A_{sym} A} & A_{sym} A^{-*} AA^* \\ \hline
\end{array}
$
\caption{Choose $P=P_{ideal}(Q)$ for matrix $Q$ in the top row. 
For each choice of $M$ in the first column, the table shows $AM^{-1}Q^*$ such that $R=R_{ideal}(AM^{-1}Q^*)$
yields an $M$-orthogonal $\Pi$. For $M=A$ we assume $A$ is SPD and for $M= A_{sym}$ we assume $A_{sym}$ is SPD.}
\label{tab:P}
\end{table}

\begin{table}[!htb]
\centering
$
\begin{array}{|c|ccc|cc|} \hline
M \text{ $\backslash$ } Q & I & A & A_{sym} & A^*A & AA^* \\ \hline
I & A^{-*} & \tcr{I} & A_{sym}^*A^{-*} & A^*AA^{-*} & \tcr{A} \\
A & \tcr{I} & \tcr{A} & \tcr{A} & \tcb{A^2} & \tcb{A^2} \\
A_{sym}  & A^{-*}A_{sym} & \tcr{A_{sym}}& A_{sym}A^{-*}A_{sym} &  A^*A A^{-*} A_{sym}& \tcb{A A_{sym}} \\ \hline
A^*A & \tcr{A} & \tcb{A^*A} & \tcb{A_{sym}A} & A^{*}A^2 & AA^*A \\
A^*A_{sym}^{-1}A & A_{sym}^{-1}A & A^*A_{sym}^{-1}A & \tcr{A} &A^*A A_{sym}^{-1} A & AA^*A_{sym}^{-1} A \\ \hline
\end{array}
$
\caption{Choose $R=R_{ideal}(Q)$ for matrix $Q$ in the top row. For each choice of $M$ in the first column, the table shows $Q^*A^{-*} M$ such that $P=P_{ideal}(Q^*A^{-*}M)$ yields an $M$-orthogonal $\Pi$. For $M=A$ we assume $A$ is SPD and for $M= A_{sym}$ we assume $A_{sym}$ is SPD.}
\label{tab:R}
\end{table}

We remind the reader that $M$ never need be computed, but both $P$ and $R$ do. In \Cref{tab:P,tab:R}, the entries marked in red involve a single operator for both $P$ and $R$, while those marked in blue involve the product of two operators for either $P$ or $R$. The other entries are either not computable or involve three or more operators. {It is difficult to precisely quantify or compare computability of different pairs of operators in practice; most are too expensive to form directly and must be approximated, and thus the cost depends entirely on the approximation algorithm. However, it is likely that algorithmic cost and approximation accuracy both depend highly on the sparsity of the operator being approximated, and to this end we believe the pairs that involve a single operator for $R$ and $P$ (marked in red) are the most likely to be approximated well in a practical algorithm.} Focusing on the red entries, we see the following combinations yield $M$-orthogonal projections:
\begin{enumerate}
\item{$M= I$, $P = P_{ideal}(I)$, $R=R_{ideal}(A)$.} In the original AIR algorithm, $R$ is chosen to be approximately ideal with respect to $A$. However, if $P$ is chosen to be $P_{ideal}(I)$, that is, $W=\mathbf{0}$, ensuring $\|\Pi\|$ is bounded independent of problem size requires that either $R$ approximates $R_{ideal}(A)$ or F-relaxation is applied, with accuracy that depends on mesh spacing $h$. I.e., $R\to R_{ideal}$ as $h\to 0$ (see \cite[Lemma 1]{air2}). To improve stability, a minimal $W\neq \mathbf{0}$ is chosen to interpolate smooth functions, tying into the approximation property discussion in \Cref{sec:intro}. 

\item {$M= A_{sym}$, $P=P_{ideal}(A_{sym})$, $R = R_{ideal}(A)$.} This is a modification of the original AIR algorithm. Here, $R$ is chosen to be ideal with respect to $A$, but $P$ is chosen to be ideal
with respect to $A_{sym}$. The result is that $\Pi$ is $A_{sym}$-orthogonal. Note that this requires $A_{sym}$ to be SPD.
\item{$M=A^*A$, $P=P_{ideal}(A)$, $R=R_{ideal}(I)$}. Here, $P$ is ideal interpolation with respect to $A$, and $R$ selects only the coarse grid equations. The result is an $A^*A$-orthogonal projection. Notice the symmetry with item 1. Here, however, the orthogonality is in the stronger $A^*A$-norm.
\item{$M=A^*A_{sym}^{-1}A$, $P =P_{ideal}(A)$, and $R  = R_{ideal}(A_{sym})$.} This pair is similar to item 2 above. Here, $P$ is chosen to be ideal interpolation with respect to $A$, and $R$ is chosen to be ideal restriction with respect to $A_{sym}$. The result is orthogonality in a stronger norm.
\end{enumerate}

{Which norm to use in practice is a difficult question, and generally problem dependent. For example, AIR \cite{air1,air2} is designed for advection problems, and the transfer operators are chosen to be approximately $\ell^2$-orthogonal, a natural choice for hyperbolic problems. If one were instead considering parabolic problems, $A_{sym}$ may be more natural. In addition, one wants a norm in which orthogonal $R$ and $P$ are tractable to approximate, tying into the discussion above regarding orthogonal pairs consisting of single operators. Finally, although a single pair of $R$ and $P$ may be orthogonal in multiple norms (see \Cref{rem:unique}), ideally one uses a known norm in which we know a-priori the modes over which relaxation is effective at attenuating error, and in which coarse-grid correction must be complementary (i.e., orthogonal in a random or unknown norm may not be useful).}

\section{Observations}\label{sec:obs}

\begin{remark}[Symmetry between $P$ and $R$ pairs]
Building on the general results above, here we demonstrate an interesting symmetry in transfer operators that yield an orthogonal projection, along with some closed forms for ideal operators. Note the identities 
\begin{align*}
AA^* & = \begin{bmatrix} A_{ff}A_{ff}^* + A_{fc}A_{fc}^* & A_{ff}A_{cf}^* + A_{fc}A_{cc}^* \\
  A_{cf}A_{ff}^* + A_{cc}A_{fc}^* & A_{cf}A_{ff}^* + A_{cc}A_{cc}^*\end{bmatrix}, \nonumber\\
A^*A & = \begin{bmatrix} A_{ff}^*A_{ff} + A_{cf}^*A_{cf} & A_{ff}^*A_{fc} + A_{cf}^*A_{cc} \\ 
  A_{fc}^*A_{ff} + A_{cc}^*A_{cf} & A_{fc}^*A_{fc} + A_{cc}^*A_{cc}\end{bmatrix}, \nonumber\\
A^{-1} & = \begin{bmatrix} \mathcal{S}_{F}^{-1} & -\mathcal{S}_{F}^{-1}A_{fc}A_{cc}^{-1} \\
  -A_{cc}^{-1}A_{cf}\mathcal{S}_{F}^{-1} & \mathcal{S}_{C}^{-1} \end{bmatrix},
\end{align*}
where $\mathcal{S}_{C} := A_{cc} - A_{cf}A_{ff}^{-1}A_{fc}$ and
$\mathcal{S}_{F} := A_{ff} - A_{fc}A_{cc}^{-1}A_{cf}$ denote the two Schur
complements of $A$. 

Then, a few of the ideal formulae of interest are:
\begin{align}
\Big[Z_{\textnormal{ideal}}{(A^{-*})}\Big]^* & = A_{cc}^{-*}A_{fc}^* , \hspace{12.25ex} W_{\textnormal{ideal}}{(A^{-*})} = A_{cf}^*A_{cc}^{-*}, \label{eq:ideal-Ainv}\\
\Big[Z_{\textnormal{ideal}}{(I)}\Big]^* & =\mathbf{0} , \hspace{20.25ex} W_{\textnormal{ideal}}{(I)} = \mathbf{0}, \nonumber\\
\Big[Z_{\textnormal{ideal}}{(A)}\Big]^* & = -A_{cf}A_{ff}^{-1} , \hspace{12ex} W_{\textnormal{ideal}}{(A)} = -A_{ff}^{-1}A_{fc}, \nonumber\\
\Big[Z_{\textnormal{ideal}}{(AA^*)}\Big]^* & = -(A_{cf}A_{ff}^* + A_{cc}A_{fc}^*)(A_{ff}A_{ff}^* + A_{fc}A_{fc}^*)^{-1}, \nonumber\\
W_{\textnormal{ideal}}{(A^*A)} & = -(A_{ff}^*A_{ff} + A_{cf}^*A_{cf})^{-1}(A_{ff}^*A_{fc} + A_{cf}^*A_{cc}). \nonumber\\
Z_{\textnormal{ideal}}{(A^{-1})} & = A_{cc}^{-1}A_{cf} , \hspace{12.5ex} W_{\textnormal{ideal}}{(A^{-1})} = A_{fc}A_{cc}^{-1}.\nonumber
\end{align}
Note that for general $Q$, $Z_{\textnormal{ideal}}{(Q^*)} =
\left[W_{\textnormal{ideal}}{(Q)}\right]^*$.

\Cref{fig:ideal} demonstrates the symmetry in ideal operators and orthogonal projections, by showing pairs of $R$ and $P$ such that $\Pi$ is orthogonal in the $I$-, $A$-, and $A^*A$-norms. For example, $R_{ideal}(AA^*)$ and $P_{ideal}(A)$ correspond to an orthogonal projection in the $I$-norm. All relations follow from \Cref{lem:ideal_pairs}.

\begin{figure}
\begin{center}
\begin{tikzpicture}
\node at (0,6) {$R_{\textnormal{ideal}}(\cdot)$};
\node at (4,6) {$P_{\textnormal{ideal}}(\cdot)$};
\draw [thick] (-1, 5.5) -- (5, 5.5);
\node at (0,5) {$A^{-*}$};
\node at (0,4) {$I$};
\node at (0,3) {$A$};
\node at (0,2) {$AA^*$};
\node at (4,5) {$A^{-*}$};
\node at (4,4) {$I$};
\node at (4,3) {$A$};
\node at (4,2) {$A^{*}A$};
\draw [dotted, ultra thick] (0.5, 5) -- (3.5,5);
\draw [dotted, ultra thick] (0.5, 4) -- (3.5,4);
\draw [dotted, ultra thick] (0.5, 3) -- (3.5,3);
\draw [dotted, ultra thick] (0.5, 2) -- (3.5,2);
\draw [dashed, ultra thick] (0.5, 5) -- (3.5,4);
\draw [dashed, ultra thick] (0.5, 4) -- (3.5,3);
\draw [dashed, ultra thick] (0.5, 3) -- (3.5,2);
\draw [ultra thick] (0.5, 4) -- (3.5,5);
\draw [ultra thick] (0.5, 3) -- (3.5,4);
\draw [ultra thick] (0.5, 2) -- (3.5,3);
\draw [ultra thick] (6, 4.25) -- (7, 4.25);
\node [align=right] at (9.1, 4.25) {$I$-orthogonal projection};
\draw [dotted, ultra thick] (6, 3.5) -- (7, 3.5);
\node [align=right] at (9.35, 3.5) {$A$-orthogonal projection};
\draw [dashed, ultra thick] (6, 2.75) -- (7, 2.75);
\node [align=right] at (9.2, 2.75) {$A^*A$-orthogonal projection};
\end{tikzpicture}
\end{center}
\caption{}
\label{fig:ideal}
\end{figure}
\end{remark}

\begin{remark}[Uniqueness]\label{rem:unique}
For a given SPD $M$, $R$ and $P$ form an $M$-orthogonal projection if they satisfy \eqref{eq:ortho1}. Let us fix $M$ and $P$. If there exists $R$ such that $\Pi$ is $M$-orthogonal, then $R$ is unique. Similarly, if we fix $M$ and $R$, if there exists $P$ such that $\Pi$ is $M$-orthogonal, this $P$ is unique. Although the pair of matrices $R$ and $P$ that project orthogonally onto $\mathcal{R}(P)$ in a given norm are unique, the norm itself is not unique, and a pair $R$ and $P$ can produce an $M$-orthogonal
$\Pi$ for many different $M$. For example, recall that $R= R_{ideal}(A)$ and interpolation $P= P_{ideal}(I)$, (that is, with $W = \mathbf{0}$) is orthogonal in the $I$-norm. It is easily verified that
this pair is also orthogonal in the norm induced by 
\begin{align*}
M & = \begin{bmatrix}M_{ff} & \mathbf{0} \\ \mathbf{0} & M_{cc} \end{bmatrix}.
\end{align*}
for any SPD $M_{ff}$ and $M_{cc}$. In this vein, one can use \eqref{eq:ortho1} to find norms in which specific $R$ and $P$ are $M$-orthogonal.
\end{remark}
\begin{remark}[Nonsymmetric generalization of $A$-norm]
Interestingly, there does not appear to be a nonsymmetric generalization of
the $A$-norm such that for a single $M$, all four relations in \Cref{fig:ideal} hold.
With significant algebra following the above framework, one can derive
necessary and sufficient conditions on fixed $M$ such that the pairs
$\{R_{ideal}(A^{-*}), P_{ideal}(A^{-*})\}$, $\{R_{ideal}(I), P_{ideal}(I)\}$,
and $\{R_{ideal}(A), P_{ideal}(A)\}$ each correspond to an $M$-orthogonal
coarse-grid correction, and these conditions do not have a general solution.
\end{remark}

\begin{remark}[Change of basis]\label{rem:basis}
As clear from \Cref{lem:ortho1}, results here are not constrained to the case of CF-splitting AMG with identities over the C-point block in transfer operators \eqref{eq:blocks}. Suppose $R$ and $P$ take the more general forms
\begin{equation*}
R = \begin{bmatrix} Z \\ Y\end{bmatrix}, \hspace{4ex}P=\begin{bmatrix} W \\ V \end{bmatrix}.
\end{equation*} 
We can still appeal to \Cref{lem:ortho1} and expand as we have done for various norms. However, as long as $Y$ and $V$ are non singular, the resulting $\Pi$ remains unchanged, and, thus, this more general form offers no advantages in theory. In practice, however, this general form introduces additional freedom compared with the classical AMG approach of $Y = V = I$. One potential benefit is if the $Z$ and $W$ leading to an orthogonal projection are not sparse in the classical CF-AMG setting, it is possible that general non-identity $Y$ and $V$ over C-point blocks will allow a more sparse or more computable/easier to approximate pair of transfer operators that yield an orthogonal projection. For example, consider \eqref{eq:ideal-Ainv}. The $A_{cc}^{-*}$ terms make these operators dense and generally intractable to form directly. But, if we apply $A_{cc}$ and $A_{cc}^*$ as the change of bases (i.e., non identity C-point blocks) to $R$ and $P$, respectively, it is trivial to construct the orthogonal pair exactly. 
\end{remark}
\begin{remark}[Stability and approximation properties]
From the operator pairs in \Cref{fig:ideal}, it is easy to see that an
approximation property, on $R$ or $P$, and stability (or an orthogonal
projection) does not imply an approximation property on the other transfer
operator. For many scalar problems, ideal interpolation and restriction have
good approximation properties, but in general $Z=\mathbf{0}$ or $W =
\mathbf{0}$ have no approximation property, despite coupling for an $I$-
or $A^*A$-orthogonal projection with $R_{\textnormal{ideal}}$ or
$P_{\textnormal{ideal}}$, respectively. Moreover, these pairings coupled
with convergent F-relaxation yield a convergent two-grid method, indicating
that approximation properties on both $P$ and $R$ are not necessary for
convergence. Conversely, in \cite{nonsymm}, an example is used
to prove that an approximation property on both $P$ \textit{and} $R$ is also
not sufficient to guarantee a stable coarse-grid correction. Thus,
the three measures, stability and two approximation properties, are
largely independent and each must be given proper consideration, and
approximation properties on $P$ and $R$ are neither necessary nor
sufficient for two-grid convergence or stable $\Pi$.
\end{remark}
\begin{remark}[Nonsymmetric AIR AMG]
As discussed in \Cref{sec:intro}, in nonsymmetric problems, relaxation must
quickly attenuate error that is amplified by the non-orthogonal coarse-grid
correction. It turns out that the nonsymmetric AMG method AIR \cite{air1,air2} is designed in such a way that this property
can be satisfied. Recall from
\cite{air2}, error propagation of coarse-grid correction can be written as
\begin{align}
(I-\Pi)\mathbf{e} & = \begin{pmatrix}\mathbf{e}_f - W\mathbf{e}_c \\\mathbf{0}\end{pmatrix} - P(R^*AP)^{-1}R^*A
	\begin{pmatrix}\mathbf{e}_f - W\mathbf{e}_c \\ \mathbf{0}\end{pmatrix}. \label{eq:cpt} 
\end{align}
If $R = R_{\textnormal{ideal}}$, then $R^*A \begin{bmatrix}\mathbf{x} \\
\mathbf{0}\end{bmatrix} = \mathbf{0}$ for all $\mathbf{x}$, and the latter term
in \eqref{eq:cpt} is zero. Error is then zero at C-points, and only nonzero at
F-points.  For $W = \mathbf{0}$, the projection is orthogonal in the $I$-norm, 
while for $W\neq\mathbf{0}$, error can  be amplified only at F-points. If  $R \neq
R_{\textnormal{ideal}}$, but is a very good approximation, the second term is
likely to remain small, and error (particularly that amplified by the
non-orthogonal projection) will still primarily reside on F-points. Indeed,
AIR is used with a post-F-relaxation (sometimes followed by some
combination of F/C-relaxation), which exactly focuses on the space in which
the non-orthogonal projection amplifies error (that is, on F-points). Thus,
for well-conditioned $A_{ff}$, F-relaxation coupled with a good approximation to
$R_{\textnormal{ideal}}$ is very likely to be complementary in exactly
the way we need it, and provides an intuitive understanding on the
effectiveness of AIR applied to hyperbolic and advection-dominated problems.
\end{remark}

\subsection*{Acknowledgements}
Los Alamos National Laboratory report LA-UR-23-26552.

\bibliographystyle{siamplain}
\bibliography{references.bib}

\begin{thebibliography}{10}

\bibitem{brannick2018optimal}
{\sc J.~Brannick, F.~Cao, K.~Kahl, R.~D. Falgout, and X.~Hu}, {\em Optimal
  interpolation and compatible relaxation in classical algebraic multigrid},
  SIAM Journal on Scientific Computing, 40 (2018), pp.~A1473--A1493.

\bibitem{Brannick.Falgout.2010}
{\sc J.~J. Brannick and R.~D. Falgout}, {\em {Compatible relaxation and
  coarsening in algebraic multigrid}}, SIAM Journal on Scientific Computing, 32
  (2010), pp.~1393 -- 1416,
  \href{http://dx.doi.org/10.1137/090772216}{doi:\nolinkurl{10.1137/090772216}}.

\bibitem{Brezina:2010dm}
{\sc M.~Brezina, T.~A. Manteuffel, S.~F. McCormick, J.~W. Ruge, and
  G.~Sanders}, {\em {Towards Adaptive Smoothed Aggregation ($\alpha$SA) for
  Nonsymmetric Problems}}, SIAM Journal on Scientific Computing, 32 (2010),
  pp.~14--39.

\bibitem{Deutsch:1995tc}
{\sc F.~Deutsch}, {\em {The angle between subspaces of a Hilbert space}}, NATO
  ASI Series C Mathematical and Physical {\ldots},  (1995), pp.~107--130.

\bibitem{Falgout:2005hm}
{\sc R.~D. Falgout, P.~S. Vassilevski, and L.~T. Zikatanov}, {\em {On two-grid
  convergence estimates}}, Numerical Linear Algebra with Applications, 12
  (2005), pp.~471--494.

\bibitem{Livne.Livne.2004}
{\sc O.~E. Livne}, {\em {Coarsening by compatible relaxation}}, Numerical
  Linear Algebra with Applications,  (2004).

\bibitem{Lottes:2017jz}
{\sc J.~Lottes}, {\em {Towards Robust Algebraic Multigrid Methods for
  Nonsymmetric Problems}}, Springer Theses, Springer International Publishing,
  Cham, 2017.

\bibitem{air1}
{\sc T.~A. Manteuffel, S.~M\"unzenmaier, J.~Ruge, and B.~S. Southworth}, {\em
  Nonsymmetric reduction-based algebraic multigrid}, SIAM J. Sci. Comput., 41
  (2019), pp.~S242--S268,
  \href{http://dx.doi.org/10.1137/18M1193761}{doi:\nolinkurl{10.1137/18M1193761}}.

\bibitem{Manteuffel:2017}
{\sc T.~A. Manteuffel, L.~N. Olson, J.~B. Schroder, and B.~S. Southworth}, {\em
  A root-node based algebraic multigrid method}, SIAM Journal on Scientific
  Computing, 39 (2017), pp.~S723--S756.

\bibitem{air2}
{\sc T.~A. Manteuffel, J.~W. Ruge, and B.~S. Southworth}, {\em Nonsymmetric
  algebraic multigrid based on local approximate ideal restriction
  ({$\ell${AIR}})}, SIAM Journal on Scientific Computing, 40 (2018),
  pp.~A4105--A4130.

\bibitem{nonsymm}
{\sc T.~A. Manteuffel and B.~S. Southworth}, {\em Convergence in norm of
  nonsymmetric algebraic multigrid}, SIAM Journal on Scientific Computing, 41
  (2019), pp.~S269--S296.

\bibitem{Notay:2000vy}
{\sc Y.~Notay}, {\em {A robust algebraic multilevel preconditioner for
  non-symmetric M-matrices}}, Numerical Linear Algebra with Applications, 7
  (2000), pp.~243--267.

\bibitem{Notay:2010em}
{\sc Y.~Notay}, {\em {Algebraic analysis of two-grid methods: The nonsymmetric
  case}}, Numerical Linear Algebra with Applications, 17 (2010), pp.~73--96.

\bibitem{notay2018}
{\sc Y.~Notay}, {\em Analysis of two-grid methods: The nonnormal case}, Tech.
  Report GANMN 18-01, 2018.

\bibitem{Sala:2008cv}
{\sc M.~Sala and R.~S. Tuminaro}, {\em {A New Petrov{\textendash}Galerkin
  Smoothed Aggregation Preconditioner for Nonsymmetric Linear Systems}}, SIAM
  Journal on Scientific Computing, 31 (2008), pp.~143--166.

\bibitem{Szyld:2006bg}
{\sc D.~B. Szyld}, {\em {The many proofs of an identity on the norm of oblique
  projections}}, Numerical Algorithms, 42 (2006), pp.~309--323.

\bibitem{vassilevski2008multilevel}
{\sc P.~S. Vassilevski}, {\em Multilevel block factorization preconditioners:
  Matrix-based analysis and algorithms for solving finite element equations},
  Springer Science \& Business Media, 2008.

\bibitem{Wiesner:2014cy}
{\sc T.~A. Wiesner, R.~S. Tuminaro, W.~A. Wall, and M.~W. Gee}, {\em {Multigrid
  transfers for nonsymmetric systems based on Schur complements and Galerkin
  projections}}, Numerical Linear Algebra with Applications, 21 (2013),
  pp.~415--438.

\end{thebibliography}

\end{document}